\documentclass[12pt]{article} 
\usepackage[margin=1in]{geometry}
\usepackage{setspace}
\usepackage{graphicx}	
\usepackage{subcaption}
\usepackage{float} 
\usepackage[toc]{appendix}
\usepackage[draft]{todonotes} 
\usepackage{enumerate}	
\usepackage{paralist}	
\usepackage{amsmath, amsthm, amssymb,mathtools}
\usepackage{thm-restate}

\usepackage[pdftex,hypertexnames=false,linktocpage=true,hidelinks]{hyperref}

\usepackage{fullpage}
\usepackage{amsmath}
\usepackage{amsfonts}

\newtheorem{theorem}{Theorem}[section]

\newtheorem{const}[theorem]{Construction}

\newtheorem{claim}{Claim}

\numberwithin{equation}{section}

\definecolor{amber}{rgb}{1.0, 0.75, 0.0}

\definecolor{forest}{rgb}{0.0, 0.5, 0.0}

\definecolor{cadmium}{rgb}{0.93, 0.53, 0.18}

\definecolor{byzantine}{rgb}{0.74, 0.2, 0.64}

\definecolor{brilliantrose}{rgb}{1.0, 0.33, 0.64}

\definecolor{caribbeangreen}{rgb}{0.0, 0.8, 0.6}

\definecolor{electriccyan}{rgb}{0.0, 1.0, 1.0}

\definecolor{periwinkle}{rgb}{0.8, 0.8, 1.0}

\definecolor{steelblue}{rgb}{0.27, 0.51, 0.71}

\usetikzlibrary{decorations.pathmorphing}

\tikzset{snake it/.style={decorate, decoration=snake}}

\title{A note on the rainbow Tur\'an number of brooms with length 2 handles}
\author{Anastasia Halfpap\footnote{Mathematics Department, Truman State University, Kirksville, MO, USA.  E-mail: \texttt{ahalfpap@truman.edu}. } }

\begin{document}

\maketitle

\begin{abstract}
    For a fixed graph $F$, the rainbow Tur\'an number $\mathrm{ex^*}(n,F)$ is the largest number of edges possible in an $n$-vertex graph which admits a rainbow-$F$-free proper edge-coloring. We focus on the rainbow Tur\'an numbers of trees obtained by appending some number of pendant edges to one end of a length 2 path; we call such a tree with $k$ total edges a $k$-edge \textit{broom} with length $2$ handle, denoted by $B_{k,2}$. Study of $\mathrm{ex^*}(n,B_{k,2})$ was initiated by Johnston and Rombach, who claimed a proof asymptotically establishing the value of $\mathrm{ex^*}(n,B_{k,2})$ for all $k$. We correct an error in this original argument, identifying two small cases in which the value claimed in the literature is incorrect; in all other cases, we recover the originally claimed value. Our argument also characterizes the extremal constructions for $\mathrm{ex^*}(n,B_{k,2})$ for certain congruence classes of $n$ modulo $k$.
\end{abstract}

\section{Introduction}
Given two graphs $G$ and $F$, we say that $G$ is \textit{$F$-free} if $G$ contains no subgraph isomorphic to $F$. The \textit{Tur\'an number of $F$}, denoted $\mathrm{ex}(n,F)$, is the maximum number of edges in an $n$-vertex $F$-free graph. Study of $\mathrm{ex}(n,F)$ forms a cornerstone of extremal combinatorics, and has inspired a variety of generalizations and extensions in recent years. In particular, an \textit{edge-coloring} of a graph $G$ is a function $c: E(G) \rightarrow \mathbb{N}$. For $e \in E(G)$, we say that $c(e)$ is the \textit{color} of $e$. We say that an edge-coloring $c$ of $G$ is \textit{proper} if no two incident edges of $G$ receive the same color under $c$. We say that $c$ is a \textit{rainbow edge-coloring} (or that $G$ is \textit{rainbow under $c$}) if all edges of $G$ receive distinct colors under $c$ (that is, if $c$ is injective). Given graphs $G$ and $F$ and an edge-coloring $c$ of $G$, we say that $G$ is \textit{rainbow-$F$-free} (under $c$) if $G$ does not contain any subgraph which is both isomorphic to $F$ and rainbow under $c$.
The \textit{rainbow Tur\'an number} of $F$, denoted $\mathrm{ex^*}(n,F)$, is the largest number of edges in an $n$-vertex graph $G$ which admits a proper edge-coloring under which $G$ is rainbow-$F$-free.

Study of $\mathrm{ex^*}(n,F)$ was initiated by Keevash, Mubayi, Sudakov, and Verstra\"ete~\cite{KMSV}, who showed (among other things) that 
\[\mathrm{ex}(n,F) \leq \mathrm{ex^*}(n,F) \leq \mathrm{ex}(n,F) + o(n^2)\]
for all graphs $F$. We have that $\mathrm{ex}(n,F) = \Theta(n^2)$ for all non-bipartite $F$ (and in fact the value of $\mathrm{ex}(n,F)$ is asymptotically understood for non-bipartite $F$; see \cite{ErSt},\cite{ErSi}). Thus, the study of $\mathrm{ex^*}(n,F)$ has primarily focused on cases where $F$ is bipartite. In this regime, it is natural to begin with trees and forests, and substantial work has been devoted to finding $\mathrm{ex^*}(n,F)$ for various trees (see, for instance, \cite{bednar2022rainbow}, \cite{ErGyMe}, \cite{halfpaprainbowp5}, \cite{JoPaSa}, \cite{JoRo}). 

In this note, we denote by $P_{\ell}$ the path on $\ell$ \textit{edges} -- that is, the path of length $\ell$. A \textit{broom} is a tree obtained from some path $P_{\ell}$ by appending some number of pendant edges to one endpoint of $P_{\ell}$. We call the path $P_{\ell}$ the \textit{handle} of this broom and the pendant edges its \textit{bristles}. Denote by $B_{k,\ell}$ the broom with a length $\ell$ handle and $k$ total edges (so, $B_{k,\ell}$ has $k - \ell$ bristles). Brooms of various dimensions are depicted in Figure~\ref{brooms}.

\begin{figure}[h!]
    \centering

\begin{tikzpicture}[scale = 1]
\draw (-1,0) coordinate (a0);
\draw (0,0) coordinate (a1);
\draw (1,0) coordinate (a2);
\draw (2,0) coordinate (a3);
\draw (2,1) coordinate (a4);
\draw (2,-1) coordinate (a5);
\draw (2,0.5) coordinate (a6);
\draw (2,-0.5) coordinate (a7);

\draw (4,0) coordinate (b1);
\draw (5,0) coordinate (b2);
\draw (6,0) coordinate (b3);
\draw (7,0) coordinate (b4);
\draw (8,1) coordinate (b5);
\draw (8,-1) coordinate (b6);

\filldraw (a0) circle (0.05 cm);
\filldraw (a1) circle (0.05 cm);
\filldraw (a2) circle (0.05 cm);
\filldraw (a3) circle (0.05 cm);
\filldraw (a4) circle (0.05 cm);
\filldraw (a5) circle (0.05 cm);
\filldraw (a6) circle (0.05 cm);
\filldraw (a7) circle (0.05 cm);

\filldraw (b1) circle (0.05 cm);
\filldraw (b2) circle (0.05 cm);
\filldraw (b3) circle (0.05 cm);
\filldraw (b4) circle (0.05 cm);
\filldraw (b5) circle (0.05 cm);
\filldraw (b6) circle (0.05 cm);

\draw (a0) -- (a3);
\draw (a2) -- (a4);
\draw (a2) -- (a5);
\draw (a2) -- (a6);
\draw (a2) -- (a7);

\draw (b1) -- (b4);
\draw (b4) -- (b5);
\draw (b4) -- (b6);
\end{tikzpicture}

    \caption{The brooms $B_{7,2}$ (left) and $B_{5,3}$ (right).}
    \label{brooms}
\end{figure}
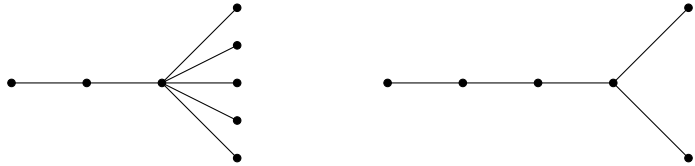

In general, determination of $\mathrm{ex^*}(n,B_{k,\ell})$ seems very difficult. The family of all brooms contains all paths, and we only understand $\mathrm{ex^*}(n,P_{\ell})$ precisely for $\ell \leq 5$ (see~\cite{JoRo} and~\cite{halfpaprainbowp5}). However, for small values of $\ell$, finding $\mathrm{ex^*}(n, B_{k,\ell})$ seems much more tractable than other rainbow Tur\'an problems for trees. Systematic study of $\mathrm{ex^*}(n,B_{k,\ell})$ was initiated by Johnston and Rombach, who claim the following result (Lemma 4.1 in~\cite{JoRo}).

\begin{theorem}[Johnston-Rombach~\cite{JoRo}]\label{JR theorem}
\[
\mathrm{ex^*}(n,B_{k,2}) = 
 \begin{cases}\frac{k}{2}n + O(1) & \text{ if } k \text{ is odd} \\
\frac{k^2}{2(k+1)}n + O(1) & \text{ if } k \text{ is even}
\end{cases}
\]
\end{theorem}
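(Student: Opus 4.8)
The plan is to match upper and lower bounds by analyzing $G$ component by component, building the colorings on each component out of (near-)complete graphs.

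\textbf{Constructions.} For the lower bound with $k$ odd I would take $G$ to be a disjoint union of copies of $K_{k+1}$ carrying a proper $k$-edge-coloring — one exists because $k+1$ is even, and then every color class is a perfect matching. This has $\tfrac k2 n$ edges up to $O(1)$, and it is rainbow-$B_{k,2}$-free: a copy of $B_{k,2}$ inside a $K_{k+1}$ uses all $k+1$ vertices, so if $w$ is the center then the handle-end $u$ is forced to be the unique vertex of the copy non-adjacent to $w$, and the handle-edge $uv$ must carry the one color missing from the $k-1$ star-edges at $w$, namely $c(wu)$; but $uv$ and $wu$ share $u$, contradicting properness. For $k$ even I would instead use disjoint copies of $K_{k+1}-D$, where $D$ is a vertex-disjoint union of cycles with $\tfrac k2$ edges in total (so $\tfrac{k^2}{2(k+1)}n$ edges up to $O(1)$); this needs $\tfrac k2\ge 3$. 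Every vertex then has degree $k$ or $k-2$, so the only candidate broom-centers are the degree-$k$ vertices, which are adjacent to everything and hence are not centers by the same argument as in the odd case.

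\textbf{Upper bound.} Suppose $(G,c)$ is properly colored and rainbow-$B_{k,2}$-free. If $\deg(w)\ge k+1$ and $w$ has a neighbor $v$ with $\deg(v)\ge 2$, then choosing $u\in N(v)\setminus\{w\}$ and then $k-2$ further neighbors of $w$ avoiding $v$, $u$, and the at most one neighbor whose edge to $w$ has color $c(uv)$ (possible since $\deg(w)-1\ge k$) produces a rainbow $B_{k,2}$; hence every vertex of degree $\ge k+1$ is the center of a star component. A tighter version of the same count shows: if $\deg(w)=k$ and $w$ is not a broom-center, then $N(v)\subseteq N[w]$ for all $v\in N(w)$ and every edge in $w$'s component has a color from the $k$-set of colors at $w$ — so that component has exactly $k+1$ vertices and uses at most $k$ colors. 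Summing over components: star components have density $<1$; components of maximum degree $\le k-1$ have density $\le\frac{k-1}{2}$; a component with a degree-$k$ vertex uses at most $k$ colors, each a matching of a $(k+1)$-set, hence at most $\binom{k+1}{2}=\tfrac{k(k+1)}{2}$ edges when $k$ is odd (density $\le\frac k2$) and at most $k\cdot\lfloor\tfrac{k+1}{2}\rfloor=\tfrac{k^2}{2}$ edges when $k$ is even (density $\le\tfrac{k^2}{2(k+1)}$). The largest of these densities in each parity is $\frac k2$ (odd) and $\tfrac{k^2}{2(k+1)}$ (even), giving the stated bounds.

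\textbf{Main difficulty and a caveat.} The technical crux is verifying that $K_{k+1}-D$ is Class $1$; I would prove this by an explicit coloring exploiting the cyclic symmetry of $K_{k+1}$ (for instance $K_7-K_3$ can be properly $6$-edge-colored by hand, which already settles $k=6$). The delicate point is that the construction requires $\tfrac k2\ge 3$: for $k\in\{2,4\}$ there is no $(k+1)$-vertex graph carrying a degree-$k$ vertex, $\tfrac{k^2}{2}$ edges, and no vertex of degree $k-1$ — indeed a parity/counting argument on the color classes shows any rainbow-$B_{k,2}$-free $(k+1)$-vertex graph with a degree-$k$ vertex has fewer than $\tfrac{k^2}{2}$ edges for these $k$ — so the true value there is $\tfrac{k-1}{2}n+O(1)$, realized by a $(k-1)$-regular Class $1$ graph, rather than $\tfrac{k^2}{2(k+1)}n$. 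For every other $k$ the argument above recovers the claimed asymptotics, and when $(k+1)\mid n$ it also identifies the extremal graphs as disjoint copies of $K_{k+1}$ ($k$ odd) or of $K_{k+1}-D$ ($k$ even).
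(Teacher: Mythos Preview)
Your proposal is correct and follows essentially the same route as the paper, including the crucial caveat that the stated formula fails for $k\in\{2,4\}$ (where the true value is $\tfrac{k-1}{2}n+O(1)$). The only difference worth noting is that the paper dispatches your ``technical crux'' (that $K_{k+1}-D$ is Class~1) by invoking Plantholt's theorem---a graph on $k+1$ vertices with maximum degree $k$ is $k$-edge-colorable iff it has at most $k^2/2$ edges---rather than by an explicit coloring, and it packages the rainbow-free verification as a clean characterization: a $k$-edge-colored $\tfrac{k^2}{2}$-edge subgraph of $K_{k+1}$ contains a rainbow $B_{k,2}$ if and only if it has a vertex of degree exactly $k-1$, of which you use (and prove) only the direction needed.
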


Unfortunately, the statement of Theorem~\ref{JR theorem} is incorrect for two small values of $k$, and the original proof given by Johnston and Rombach contains an error in all cases where $k$ is even. The purpose of this note is to amend the statement of Theorem~\ref{JR theorem} to capture these small exceptions and to provide a proof which both captures these exceptional cases and addresses the broader issue with the original argument. We state a corrected version of the theorem now.

\begin{restatable}{theorem}{correctedtheorem}\label{corrected theorem}
Let $k \geq 2$.
\[
\mathrm{ex^*}(n,B_{k,2}) = 
 \begin{cases}\frac{k}{2}n + O(1) & \text{ if } k \text{ is odd} \\
 \frac{k-1}{2}n + O(1) & \text{ if } k \in\{2,4\}\\
\frac{k^2}{2(k+1)}n + O(1) & \text{ if } k \text{ is even and } k \geq 6
\end{cases}
\]
\end{restatable}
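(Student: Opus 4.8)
The plan is to prove matching lower and upper bounds, with the truth of the theorem hinging on when the extremal ``gadget'' on $k+1$ vertices exists.

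\medskip\noindent\textbf{Lower bounds.} A first, uniform construction that is tight for $k\in\{2,4\}$ is a disjoint union of $(k-1)$-regular graphs carrying a proper $(k-1)$-edge-coloring: at any putative center $v$ (degree $k-1$) a copy of $B_{k,2}$ must use \emph{all} $k-1$ edges at $v$, hence all $k-1$ colors, so the tip edge $v_0v_1$ -- whose color differs from the handle edge $v_1v$ by properness -- is forced to repeat one of the $k-2$ bristle colors; this gives $\tfrac{k-1}{2}n+O(1)$. For odd $k$ I would instead take disjoint copies of $K_{k+1}$ with any proper $k$-edge-coloring: every vertex has degree $k$ and sees all $k$ colors, the tip vertex $v_0$ is forced to be the unique neighbor of $v$ omitted from the broom, and $c(v_0v_1)\neq c(vv_1),c(v v_0)$ forces $c(v_0v_1)$ to coincide with a bristle color, giving density $\tfrac k2$. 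For even $k\ge 6$ the correct gadget is a carefully chosen $(k+1)$-vertex graph $H_k$ with $\tfrac{k^2}{2}$ edges in which every vertex has degree exactly $k$ or at most $k-2$ and with $\chi'(H_k)=k$; concretely one takes $a$ vertices of degree $k$ (forming a clique joined to everything) with $\tfrac k2+1\le a\le k-2$, the remaining $k+1-a$ vertices spanning few edges. With a proper $k$-edge-coloring the degree-$k$ vertices see all $k$ colors and are adjacent to everything else, so the forced-tip argument kills every broom centered there, while the low-degree vertices are too small to be centers; this yields density $\tfrac{k^2}{2(k+1)}$.

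\medskip\noindent\textbf{Upper bound: structure.} Let $G$ be $n$-vertex, properly colored, and rainbow-$B_{k,2}$-free. The case $k=2$ is immediate, since $B_{2,2}=P_2$ is rainbow in every proper coloring, forcing $G$ to be a matching. For $k\ge 3$ I would first show that a vertex of degree $\ge k+1$ has only leaf neighbors: otherwise pick a neighbor $v_1$ with a further neighbor $v_0$, and note $v$ still has $\ge k-2$ other neighbors avoiding $v_1$, $v_0$, and the at-most-one neighbor whose edge repeats $c(v_0v_1)$, producing a rainbow $B_{k,2}$. Such vertices and their leaves form a union of stars (the ``star part'') with at most $n$ edges; the remaining ``core'' has maximum degree $\le k$ and no edges to the star part. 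If a core vertex $v$ has degree exactly $k$, the same counting forces $N(N(v))\subseteq N[v]$, so the component $C$ of $v$ has exactly $k+1$ vertices; moreover every chord inside $N(v)$ must carry the color of some spoke at $v$ (else a rainbow $B_{k,2}$ with forced bristles appears), so $C$ is edge-colored with only $v$'s $k$ spoke colors and $\chi'(C)=k$. Hence $e(C)\le k\lfloor\tfrac{k+1}{2}\rfloor$, i.e.\ $\binom{k+1}{2}$ for odd $k$ and $\tfrac{k^2}{2}$ for even $k$.

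\medskip\noindent\textbf{Upper bound: even refinement and assembly.} For even $k$ I would push further. Since $C$ uses exactly $k$ colors, a degree-$(k-1)$ vertex $w$ of $C$ misses a unique color $m(w)$; following the edge of color $m(w)$ at the unique non-neighbor $s$ of $w$ lands at a neighbor $v_1$ of $w$, and $w\!-\!v_1\!-\!s$ with the remaining $k-2$ neighbors of $w$ as bristles is a rainbow escape-broom -- contradiction. So $C$ has no degree-$(k-1)$ vertex, every vertex of $C$ has degree $k$ or $\le k-2$, the degree-$k$ ones are universal, and a short edge count shows $e(C)\le\tfrac{k^2}{2}$ when $k\ge 6$ (achieved), but only $e(C)\le 7<\tfrac{k-1}{2}(k+1)$ when $k=4$, because the inequalities $a\ge\tfrac{k+2}{2}$ and $a\le k-2$ forced on the number $a$ of degree-$k$ vertices are incompatible at $k=4$. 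In all cases every core component has edge density at most $\rho$, where $\rho=\tfrac k2$ ($k$ odd), $\rho=\tfrac{k^2}{2(k+1)}$ ($k$ even, $k\ge 6$), and $\rho=\tfrac{k-1}{2}$ ($k=4$). Since $\rho>1$ for $k\ge 3$, replacing a star-part vertex by a core vertex never decreases the bound, so summing over components gives $e(G)\le\rho\,n$, matching the constructions up to $O(1)$.

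\medskip\noindent\textbf{Main obstacle.} The crux -- and the likely source of the error in the original argument -- is the even-$k$ construction: the ``obvious'' candidates (disjoint $K_{k+1}$, or $K_{k+1}$ minus a near-perfect matching) are \emph{not} rainbow-$B_{k,2}$-free, since one can always route an escape-broom through the color missed at a degree-$(k-1)$ vertex; one must instead isolate the gadget $H_k$ with no degree-$(k-1)$ vertex and verify it is Class~$1$, so that its degree-$k$ vertices genuinely see all $k$ colors. The remaining work is bookkeeping: stating the forced-bristle and escape-broom arguments precisely (in particular tracking which colors are missed at low-degree vertices), and the small finite analysis pinning down $k=4$; it is precisely the incompatibility of the two constraints on $a$ that makes $k=4$ -- and trivially $k=2$ -- genuine exceptions to the pattern.
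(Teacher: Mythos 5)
Your architecture is essentially the paper's: the star/$K_{k+1}$ dichotomy, the forced-$k$-colors observation at a degree-$k$ vertex, the ``escape-broom'' through the color missed at a degree-$(k-1)$ vertex, and the even-$k$ gadget characterized by having no degree-$(k-1)$ vertex (complement with $\frac{k}{2}$ edges and minimum degree $\geq 2$), infeasible exactly at $k\in\{2,4\}$. However, as written there is a real gap in the escape-broom step. You assert that, at the unique non-neighbor $s$ of a degree-$(k-1)$ vertex $w$, one can ``follow the edge of color $m(w)$,'' but nothing you have established guarantees that $s$ is incident to an edge of the missing color; indeed your intermediate claim that every core component containing a degree-$k$ vertex has no degree-$(k-1)$ vertex is false. (Take the extremal gadget for $k\geq 6$ with its rainbow-free coloring and delete one edge between two universal vertices: the result is still properly colored with $k$ colors, still rainbow-$B_{k,2}$-free, still has degree-$k$ vertices, and now has vertices of degree $k-1$.) The argument works only when $e(C)=\frac{k^2}{2}$, because then each of the $k$ color classes has exactly $\frac{k}{2}$ edges, so each color is missed by exactly one vertex, and since $w$ misses $m(w)$ the vertex $s$ must see it. So you must first isolate the hypothesis $e(C)=\frac{k^2}{2}$ (e.g., by assuming $e(C)>\frac{k-1}{2}(k+1)$ for contradiction, as the paper does) before running the escape-broom; the same caveat applies to your inequality $a\geq\frac{k+2}{2}$, which also presupposes that the complement of $C$ has only $\frac{k}{2}$ edges. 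With that reordering your $k=4$ analysis closes correctly, and for $k\geq 6$ the bound $e(C)\leq\frac{k^2}{2}$ already follows from the $k$-coloring alone, as you note.

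The second gap is the colorability of your even-$k$ gadget: you require $\chi'(H_k)=k$ and defer it as ``verify it is Class 1,'' but this is not bookkeeping. A graph on $k+1$ (odd) vertices with $\Delta=k$ could a priori be Class 2, and the natural attempt--deleting a color class from a proper $(k+1)$-edge-coloring of $K_{k+1}$--is precisely what creates degree-$(k-1)$ vertices and fails. The paper supplies this step via Plantholt's theorem (a graph on $k+1$ vertices with $\Delta=k$, $k$ even, is $k$-edge-colorable if and only if it has at most $\frac{k^2}{2}$ edges), which applies verbatim to your $H_k$; absent that citation you would need an explicit proper $k$-edge-coloring of your specific gadget. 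Once these two points are supplied, your proof is correct and coincides in substance with the paper's argument.
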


The remainder of this note is organized as follows. In Subsection~\ref{section: notation}, we briefly summarize the notation used in our arguments. In Section~\ref{section: proof}, we discuss the error in the original proof of Theorem~\ref{JR theorem} and we give a proof of Theorem~\ref{corrected theorem}.

\subsection{Notation}\label{section: notation}

We endeavor to use notation standard to the area. For reference, we collect a variety of ubiquitous definitions here. 

A \textit{graph} $G$ is defined by a pair of sets: a vertex set $V(G)$ and an edge set $E(G)$ of unordered pairs from $V$. All graphs in this note are finite and simple. Vertices $u,v$ are \textit{adjacent} in $G$ if $uv \in E(G)$ and vertex $v$ is \textit{incident} to edge $e$ if $v \in e$. The \textit{degree} of a vertex $v \in V(G)$, denoted $d(v)$, is the number of edges in $E(G)$ which are incident to $v$. We denote the maximum degree in $G$ by $\Delta(G)$. The \textit{neighborhood} of $v$, denoted $N(v)$, is the set of vertices adjacent to $v$.

In addition to the edge-coloring definitions given in the introduction, we use several standard terms to describe edge-colored graphs. Given a graph $G$, an edge-coloring $c$ is a $k$-edge-coloring if the image of $c$ has size $k$; we say that $G$ is $k$-edge-colored under $c$. Similarly, $G$ is $k$\textit{-edge-colorable} if there exists a proper coloring $c$ of $G$ using $k$ colors. When it will not cause confusion, we typically omit the explicit reference to the coloring function $c$ -- so, say that $G$ is a $k$-edge-colored graph. In an edge-colored graph $G$, a \textit{color class} is a set of edges which all receive the same edge-color. 

We will use some standard graph operations to describe constructions. Given a graph $G$, the \textit{complement} of $G$, denoted $\overline{G}$, is the graph with $V(\overline{G}) = V(G)$ and $e \in E(\overline{G})$ if and only if $e \not\in E(G)$. Given two graphs $G$ and $H$ on (disjoint) vertex sets $V(G)$ and $V(H)$, the \textit{union} $G \cup H$ is the graph on vertex set $V(G) \cup V(H)$ and edge set $E(G) \cup E(H)$.

We recall that in this note, we denote by $P_{\ell}$ the path on $\ell$ \textit{edges}. We denote specific paths by the concatenation of their vertices, e.g., the path $xyz$ is a copy of $P_2$ with edges $xy$ and $yz$.

\section{Proof of Theorem~\ref{corrected theorem}}\label{section: proof}

We begin by briefly discussing $\mathrm{ex^*}(n,B_{2,2})$ and $\mathrm{ex^*}(n,B_{4,2})$. The graphs $B_{2,2}$ and $B_{4,2}$ are depicted in Figure~\ref{22 and 42}. 
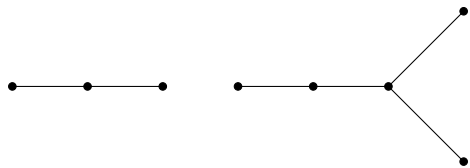
\begin{figure}[h!]
    \centering

\begin{tikzpicture}[scale = 1]

\draw (0,0) coordinate (a1);
\draw (1,0) coordinate (a2);
\draw (2,0) coordinate (a3);

\draw (3,0) coordinate (b2);
\draw (4,0) coordinate (b3);
\draw (5,0) coordinate (b4);
\draw (6,1) coordinate (b5);
\draw (6,-1) coordinate (b6);

\filldraw (a1) circle (0.05 cm);
\filldraw (a2) circle (0.05 cm);
\filldraw (a3) circle (0.05 cm);

\filldraw (b2) circle (0.05 cm);
\filldraw (b3) circle (0.05 cm);
\filldraw (b4) circle (0.05 cm);
\filldraw (b5) circle (0.05 cm);
\filldraw (b6) circle (0.05 cm);

\draw (a1) -- (a3);

\draw (b2) -- (b4);
\draw (b4) -- (b5);
\draw (b4) -- (b6);
\end{tikzpicture}

    \caption{The brooms $B_{2,2}$ (left) and $B_{4,2}$ (right).}
    \label{22 and 42}
\end{figure}
While $B_{2,2}$ is a degenerate case, we include it for completeness.

Observe that any properly edge-colored copy of $B_{2,2}$ is rainbow. Since a $B_{2,2}$-free graph has maximum degree 1, it is immediate that 
\[\mathrm{ex^*}(n,B_{2,2}) = \mathrm{ex}(n, B_{2,2}) = \left\lfloor \frac{n}{2}\right\rfloor = \frac{1}{2}n + O(1).\]
Again, this is a somewhat degenerate case. The case of $B_{4,2}$ is more interesting. It is possible to show by direct case analysis that $\mathrm{ex^*}(n,B_{4,2}) = \frac{3}{2}n+O(1)$. However, as the argument for Theorem~\ref{corrected theorem} will yield the value of $\mathrm{ex^*}(n,B_{4,2})$ without case analysis specific to $B_{4,2}$, we wait until the proof of Theorem~\ref{corrected theorem} to establish this value.

Before we prove Theorem~\ref{corrected theorem}, we discuss the reason why these exceptions have been overlooked. In the proof of Theorem~\ref{JR theorem}, the following construction is used to derive a lower bound on $\mathrm{ex^*}(n,B_{k,2})$ for even $k$. 

\begin{const}\label{JR construction}
Fix $n\geq 1$ and $k$ an even integer. Take a properly $(k+1)$-edge-colored copy of $K_{k+1}$, and delete a color class to obtain a $k$-edge-colored subgraph with $\frac{(k+1)k}{2} - \frac{k}{2} = \frac{k^2}{2}$ edges. Taking $\left\lfloor \frac{n}{k+1} \right\rfloor$ disjoint copies of this graph yields a construction with $\frac{k^2}{2(k+1)}n + O(1)$ edges. 
\end{const}

However, it is not demonstrated that Construction~\ref{JR construction} is actually rainbow-$B_{k,2}$-free. Johnston and Rombach argue that in a rainbow-$B_{k,2}$-free graph $G$, if a component $H$ has maximum degree $ \geq k$, then $H$ must have exactly $k+1$ vertices and receive exactly $k$ edge-colors. As $k+1$ is odd, these conditions imply that each color class has size at most $\frac{k}{2}$, so that $H$ contains at most $\frac{k^2}{2}$ edges. Counting edges in $G$ component by component, these necessary conditions imply that $\mathrm{ex^*}(n,B_{k,2}) \leq \frac{k^2}{2(k+1)}n + O(1)$. Construction~\ref{JR construction} satisfies all of these necessary conditions, but it is not established that these necessary conditions are in fact sufficient to guarantee that a graph is rainbow-$B_{k,2}$-free. As we shall see in the proof of Theorem~\ref{corrected theorem}, Construction~\ref{JR construction} is actually never rainbow-$B_{k,2}$-free. In Figure~\ref{B42}, we illustrate Construction~\ref{JR construction} in the case $k = 4$, and explicitly display a copy of $B_{4,2}$ contained within the construction.

\begin{figure}[h!]
    \centering

\begin{tikzpicture}[scale = 1.5]
\draw ({0},{1}) coordinate (a1);
\draw ({0.951},{0.309}) coordinate (a2);
\draw ({-0.951},{0.309}) coordinate (a5);
\draw ({0.587},{-0.809}) coordinate (a3);
\draw ({-0.587},{-0.809}) coordinate (a4);

\draw ({0+3},{1}) coordinate (b1);
\draw ({0.951 + 3},{0.309}) coordinate (b2);
\draw ({-0.951 + 3},{0.309}) coordinate (b5);
\draw ({0.587 + 3},{-0.809}) coordinate (b3);
\draw ({-0.587 + 3},{-0.809}) coordinate (b4);

\draw[thick, red] (a2) -- (a5);
\draw[thick, red] (a3) -- (a4);
\draw[thick, dashed, cadmium] (a1) -- (a3);
\draw[thick, dashed, cadmium] (a4) -- (a5);
\draw[double, forest] (a1) -- (a5);
\draw[double, forest] (a2) -- (a4);
\draw[snake it, blue] (a3) -- (a5);
\draw[snake it, blue] (a1) -- (a2);

\draw[snake it, blue] (b1) -- (b2);
\draw[double, forest] (b2) -- (b4);
\draw[thick, red] (b3) -- (b4);
\draw[thick, dashed, cadmium] (b4) -- (b5);

\filldraw (a1) circle (0.05 cm) node[above]{$x$};
\filldraw (a2) circle (0.05 cm) node[right]{$y$};
\filldraw (a3) circle (0.05 cm) node[right]{$z$};
\filldraw (a4) circle (0.05 cm) node[left]{$u$};
\filldraw (a5) circle (0.05 cm) node[left]{$v$};

\filldraw (b1) circle (0.05 cm) node[above]{$x$};
\filldraw (b2) circle (0.05 cm) node[right]{$y$};
\filldraw (b3) circle (0.05 cm) node[right]{$z$};
\filldraw (b4) circle (0.05 cm) node[left]{$u$};
\filldraw (b5) circle (0.05 cm) node[left]{$v$};

\end{tikzpicture}

    \caption{Left, a $4$-edge-colored subgraph of $K_5$ obtained by removing one color class from a $5$-edge-coloring. Right, one of the rainbow-$B_{4,2}$-copies contained in this subgraph.}
    \label{B42}
\end{figure}
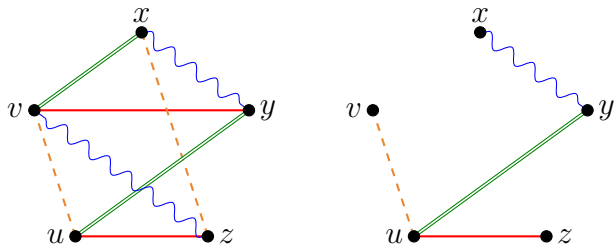

As we shall see in the proof of Theorem~\ref{corrected theorem}, the failure of Construction~\ref{JR construction} is not specific to the case $k = 4$; for any even $k$, Construction~\ref{JR construction} contains rainbow copies of $B_{k,2}$. However, the failure of Construction~\ref{JR construction} does not itself imply that the conclusion of Theorem~\ref{JR theorem} is incorrect for even $k$. Indeed, in almost all cases, we are able to recover the bound in Theorem~\ref{JR theorem} via somewhat different constructions (which also satisfy the necessary conditions given by Johnston and Rombach). To quickly argue that our constructions are $k$-edge-colorable, the following theorem of Plantholt will be useful.

\begin{theorem}[Plantholt~\cite{Plantholt}]
Let $k > 0$ be an even integer and $G$ be a graph with $|V(G)| = k+1$ and $\Delta(G) = k$. Then $G$ is $k$-edge-colorable if and only if $|E(G)| \leq \frac{k^2}{2}$.
\end{theorem}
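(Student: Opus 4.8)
The plan is to prove the two directions separately: necessity by a one-line counting argument, and sufficiency by exploiting the universal vertex forced by $\Delta(G)=k=|V(G)|-1$ to reduce the problem to a system-of-distinct-representatives (perfect matching) question, which I then settle by a recoloring argument. For necessity, suppose $G$ is $k$-edge-colorable. Each color class is a matching, and since $|V(G)|=k+1$ is odd (as $k$ is even), a matching has at most $\lfloor (k+1)/2\rfloor=k/2$ edges. Summing over the $k$ classes gives $|E(G)|\le k\cdot\tfrac{k}{2}=\tfrac{k^2}{2}$, as required.

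For sufficiency, assume $|E(G)|\le k^2/2$; I want to produce a proper $k$-edge-coloring. Because $\Delta(G)=k=|V(G)|-1$, some vertex $v$ is adjacent to all others; set $H=G-v$, a graph on the $k$ (even) vertices $v_1,\dots,v_k$ with $\Delta(H)\le k-1$ and $|E(H)|=|E(G)|-k\le \tfrac{k(k-2)}{2}$. The key reduction is the following equivalence: a proper $k$-edge-coloring of $G$ exists if and only if $H$ admits a proper $k$-edge-coloring together with a bijection $\pi$ from $\{v_1,\dots,v_k\}$ to the palette $\{1,\dots,k\}$ such that color $\pi(v_j)$ is \emph{missing} at $v_j$ (used on no $H$-edge incident to $v_j$). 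Indeed, given such a coloring and bijection, assigning color $\pi(v_j)$ to each edge $vv_j$ extends properly — the colors at $v$ are distinct since $\pi$ is a bijection, and no conflict arises at $v_j$ since $\pi(v_j)$ was missing there — while conversely, restricting any proper $k$-edge-coloring of $G$ to $H$ leaves $c(vv_j)$ missing at $v_j$, and $j\mapsto c(vv_j)$ is a bijection. Thus it suffices to find a proper $k$-edge-coloring of $H$ whose \emph{missing-color bipartite graph} $B$ — with parts $\{v_1,\dots,v_k\}$ and $\{1,\dots,k\}$, and $v_j\sim t$ whenever color $t$ is missing at $v_j$ — has a perfect matching.

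To obtain a structured starting coloring I would embed $H\subseteq K_k$ and fix a $1$-factorization $K_k=F_1\cup\dots\cup F_{k-1}$ (which exists as $k$ is even), coloring each edge of $H\cap F_t$ with color $t$. This is a proper $k$-edge-coloring of $H$ in which color $k$ is unused — hence missing at every vertex — and for $t\le k-1$ color $t$ is missing at $v_j$ exactly when the unique $F_t$-edge at $v_j$ is a non-edge of $H$; in particular each such color is missing at an even number of vertices. Note also that the total number of edges of $B$ equals the total deficiency $\sum_j(k-\deg_H(v_j))=k^2-2|E(H)|\ge 2k$. The remaining, and main, task is to force the perfect matching in $B$. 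I expect this to be the crux, and it cannot be dispatched by naive counting: a single Hall violator — a color set $A$ whose missing-vertex neighborhood is smaller than $|A|$ — forces only on the order of $|A|(k-|A|)/2\le k^2/8$ edges in $H$, far below the threshold $\tfrac{k(k-2)}{2}$ that would be needed to contradict the hypothesis. The approach instead is a recoloring (Kempe-chain / Vizing-fan) argument: starting from the coloring above, whenever $B$ fails Hall's condition I would pick two colors $\alpha,\beta$ and swap them along a maximal alternating $\alpha\beta$-path so as to strictly enlarge a maximum matching of $B$, using the deficiency slack $k^2-2|E(H)|\ge 2k$ to guarantee that an improving swap is always available until a perfect matching is reached. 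Proving that this augmentation can always be carried out is the main obstacle; once it is established, the resulting bijection $\pi$ yields, through the reduction above, a proper $k$-edge-coloring of $G$, which together with necessity completes the proof.
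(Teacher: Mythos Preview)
The paper does not prove this statement at all: Plantholt's theorem is quoted as an external result from~\cite{Plantholt} and is used as a black box in the proof of Theorem~\ref{corrected theorem}. So there is no ``paper's own proof'' to compare against, and your attempt should be judged on its own merits.

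Your necessity direction is complete and correct. Your sufficiency direction, however, is not a proof but a proof sketch with an explicitly acknowledged gap. The reduction to the missing-color bipartite graph $B$ is sound, and starting from a $1$-factorization of $K_k$ is a reasonable way to obtain an initial coloring. But the entire difficulty of Plantholt's theorem lies precisely in the step you label ``the main obstacle'': showing that one can always recolor via Kempe chains so as to augment a maximum matching in $B$ until it becomes perfect. You assert that the deficiency slack $k^2-2|E(H)|\ge 2k$ ``guarantees that an improving swap is always available,'' but you give no argument for this, and it is not obvious. A Kempe $(\alpha,\beta)$-swap along a path changes the missing-color incidences only at the two endpoints of that path, and one must argue carefully that some choice of $\alpha,\beta$ and some choice of component actually increases the matching number of $B$; naive attempts can cycle or fail to improve. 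Plantholt's original argument handles exactly this point with a nontrivial case analysis, and you have not supplied a substitute.

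In short: the framework is right, the easy direction is done, but the hard direction is left at the level of ``I expect a Vizing-fan style argument to work here,'' which is where essentially all of the content of the theorem lives.
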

With Plantholt's Theorem in hand, we are ready to prove Theorem~\ref{corrected theorem}, which we re-state here for convenience. In the interest of self-containment, prove the full statement of Theorem~\ref{corrected theorem}, but we note that for $k$ odd, our proof is identical to the original argument of Johnston and Rombach; a new argument is required only for $k$ even.

\correctedtheorem*

\begin{proof}
As noted above, we immediately have $\mathrm{ex^*}(n,B_{2,2}) = \frac{1}{2}n + O(1)$ and this is a somewhat degenerate case. For the remainder of the proof, we assume $k \geq 3$. We begin by making some general observations that lead to an upper bound for all $k \geq 3$. Suppose that $H$ is a connected graph which admits a rainbow-$B_{k,2}$-free proper edge-coloring and which contains a vertex $v$ with $d(v) \geq k$. It is straightforward to verify the following facts. Firstly, if any neighbor $u$ of $v$ is adjacent to any non-neighbor $w$ of $v$, then the path $wuv$ forms the handle of a rainbow $B_{k,2}$-copy (whose $k-2$ bristles are selected from the at least $k-1$ edges which are incident to $v$ and not equal to $vu$). Secondly, if $d(v) > k$ and there exist $u,w \in N(v)$ with $uw \in E(H)$, then the path $wuv$ forms the handle of a rainbow $B_{k,2}$-copy (whose $k-2$ bristles are selected from the at least $k-1$ edges which are incident to $v$ and not equal to either $vu$ or $vw$). Thus, if $\Delta(H) > k$, then $H$ is a star, and if $\Delta(H) = k$, then $H$ is a subgraph of $K_{k+1}$.  We conclude that in an $n$-vertex rainbow-$B_{k,2}$-free graph $G$, each component is either a star or has maximum degree $\leq k$, so $|E(G)| \leq \frac{k}{2}n$. Thus, $\mathrm{ex^*}(n,B_{k,2}) \leq \frac{k}{2}n$ for all $k \geq 3$. 

We show that this upper bound is tight when $k$ is odd. When $k$ is odd, it is well-known that $K_{k+1}$ admits a proper $k$-edge-coloring (see~\cite{Kirkman}). We claim that any proper $k$-edge-coloring of $K_{k+1}$ is rainbow-$B_{k,2}$-free. Indeed, suppose that $c$ is a proper $k$-edge-coloring of $K_{k+1}$ (say assigning colors $1, 2, \dots k$ to the edges of $K_{k+1}$) and consider some $B_{k,2}$-copy $B$ in $K_{k+1}$. Let $v$ be the unique vertex of degree $k-1$ in $B$, and let $w$ be the unique vertex which is not adjacent to $v$ in $B$. Note that since $c$ is proper, the edges incident to $v$ in $B_{k,2}$ receive $k-1$ distinct edge-colors; without loss of generality, in $B$, $v$ is incident to edges of colors $1, 2, \dots, k-1$. Since $c$ is a $k$-edge-coloring, each color class of $c$ is a perfect matching of $K_{k+1}$, so $v$ is incident to an edge of color $k$ in $K_{k+1}$, which must be $vw$. However, since $c$ is proper and the edge incident to $w$ in $B$ is not $vw$, this means that $B$ does not contain an edge of color $k$. As every edge-color in $B$ is from $\{1,2, \dots, k\}$, it follows that $B$ receives exactly $k-1$ edge-colors under $c$, so is not rainbow. If $n$ is divisible by $k$, we can thus construct an $n$-vertex, rainbow-$B_{k,2}$-free graph with $\frac{k}{2}n$ edges by taking $\frac{n}{k+1}$ disjoint, $k$-edge-colored copies of $K_{k+1}$. In general, taking $\lfloor \frac{n}{k+1} \rfloor$ copies of $K_{k+1}$ shows that $\mathrm{ex^*}(n,B_{k,2}) \geq \frac{k}{2} n + O(1)$ when $k$ is odd.

We next consider the cases when $k$ is even (and larger than $2$). We certainly have $\frac{k-1}{2}n + O(1) \leq \mathrm{ex^*}(n,B_{k,2})$ for all even $k$, as it suffices to find an $n$-vertex (nearly) $(k-1)$-regular graph which is $(k-1)$-edge-colorable. Such graphs certainly exist for all $n$: take, for instance, $\lfloor \frac{n}{k} \rfloor$ disjoint copies of $K_k$ along with $n - k\lfloor \frac{n}{t}\rfloor$ isolated vertices. 
We now work to determine when a $(k-1)$-regular construction is (and is not) optimal. Suppose that there exists a rainbow-$B_{k,2}$-free graph $G$ on some number of vertices $n$ with $|E(G)| > \frac{k-1}{2}n$. Consider a densest component $H$ of $G$. We have $|E(H)| > \frac{k-1}{2}|V(H)|$, so $H$ contains a vertex of degree at least $k$, say $v$. As noted above, $H$ is either a star or $d(v) = k$ and $H$ is a subgraph of $K_{k+1}$. Since a star clearly does not have the desired density (since $k > 2$), $H$ must be a subgraph of $K_{k+1}$. Note also that $H$ must be $k$-edge-colored to avoid a rainbow $B_{k,2}$-copy; indeed, if $v$ is incident to edges of colors $1,2, \dots,k$ and there are $u,w \in N(v)$ such that $uw$ is an edge with $c(uw) = k+1$, then the path $wuv$ forms the handle of a rainbow-$B_{k,2}$-copy (whose $k-2$ bristles are exactly the $k-2$ edges which are incident to $v$ and not equal to either $vu$ or $vw$). 

As $k+1$ is odd, each color class in $H$ has size at most $\frac{k}{2}$. Adding sizes of color classes, it is clear that $|E(H)| \leq \frac{k^2}{2}$. On the other hand, our density assumption on $H$ says that
\[E(H) > \frac{k-1}{2}(k+1) = \frac{k^2 - 1}{2}.\]
Thus, $H$ is a rainbow-$B_{k,2}$-free, $k$-edge-colored subgraph of $K_{k+1}$ containing exactly $\frac{k^2}{2}$ edges. As $H$ is a densest component of $G$, we have $|E(G)| \leq \frac{k^2}{2(k+1)}n$.

Every subgraph of $K_{k+1}$ with exactly $\frac{k^2}{2}$ edges has maximum degree $k$ and is thus $k$-edge-colorable by Plantholt's Theorem. However, it is not clear that every such subgraph has a rainbow-$B_{k,2}$-free $k$-edge-coloring. Actually, such a subgraph may or may not be rainbow-$B_{k,2}$-free. In light of the above discussion, if there exists a subgraph $H$ of $K_{k+1}$ with exactly $\frac{k^2}{2}$ edges and which admits a rainbow-$B_{k,2}$-free proper $k$-edge-coloring, then $\mathrm{ex^*}(n,B_{k,2}) = \frac{k^2}{2(k+1)}n + O(1)$ (as we can match the upper bound by taking disjoint copies of this subgraph $H$). If no such subgraph $H$ exists, then we achieve a contradiction to our supposition that there exists an $n$-vertex rainbow-$B_{k,2}$-free graph $G$ with $|E(G)| > \frac{k-1}{2}n$, and thus we have $\mathrm{ex^*}(n,B_{k,2}) = \frac{k-1}{2}n + O(1)$.

\begin{claim}\label{no t-1}
Let $H$ be a $k$-edge-colored subgraph of $K_{k+1}$ with exactly $\frac{k^2}{2}$ edges. $H$ contains a rainbow $B_{k,2}$-copy if and only if $H$ contains a vertex of degree exactly $k-1$.
\end{claim}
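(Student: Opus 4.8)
The plan is to first pin down the structure of the color classes of $H$, and then prove the two implications separately using that structure. Since the coloring is proper, each of the $k$ color classes is a matching in $K_{k+1}$; as $k+1$ is odd, any such matching has at most $k/2$ edges, and since $|E(H)| = k^2/2 = k\cdot\frac{k}{2}$, every color class must in fact be a matching of size exactly $\frac{k}{2}$ — that is, a near-perfect matching covering all but one vertex. For each color $i$ let $m_i$ be the vertex it misses, and for $v \neq m_i$ let $\phi_i(v)$ be the partner of $v$ in the color-$i$ matching. A vertex $v$ is incident to color $i$ precisely when $m_i \neq v$, so $d_H(v) = k - |\{i : m_i = v\}|$; in particular $d_H(v) = k-1$ exactly when $v$ is missed by a single color class, and $d_H(v) = k$ exactly when $v$ is missed by none. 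I will also use that $B_{k,2}$ has $k$ edges and $k+1$ vertices, so any copy of $B_{k,2}$ in $H$ is spanning: writing $v$ for its center (the degree-$(k-1)$ vertex of the broom), $u$ for the middle vertex of the length-$2$ handle, and $w$ for the far endpoint of the handle, $v$ is adjacent in the copy to every vertex except $w$, and the only edge of the copy not incident to $v$ is $uw$.

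For the \emph{only if} direction, suppose $B$ is a rainbow copy of $B_{k,2}$ in $H$ with center $v$. Then $d_H(v) \geq d_B(v) = k-1$, and I claim equality holds. If instead $d_H(v) = k$, then $v$ is incident to all $k$ colors, each exactly once, and the $k-1$ edges of $B$ at $v$ are precisely the edges from $v$ to all vertices other than $w$; hence these edges carry every color except $c(vw)$. Since $B$ is rainbow with $k$ edges, it uses all $k$ colors, which forces its remaining edge $uw$ to satisfy $c(uw) = c(vw)$. But then $w$ meets two edges, $wv$ and $wu$, of the same color, contradicting properness. So a rainbow $B_{k,2}$-copy forces a vertex of degree exactly $k-1$.

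For the \emph{if} direction, suppose $d_H(v) = k-1$. Then $v$ is missed by exactly one color, say $i_0$, so $v$ is incident to every color but $i_0$ and has a unique non-neighbor $w$ in $H$. I will exhibit a rainbow broom centered at $v$ with handle $w\,u\,v$ and with the remaining $k-2$ vertices as bristles, the crux being to take $u := \phi_{i_0}(w)$. This is well defined since $w \neq v = m_{i_0}$, and $u \neq v$ since $v$ is uncovered by color $i_0$; as $w$ is the only non-neighbor of $v$ and $u \notin \{v,w\}$, we get $vu \in E(H)$, while $uw \in E(H)$ with color $i_0$ by construction. Each bristle vertex lies in $V(H) \setminus \{v,u,w\} \subseteq N(v)$, so the bristle edges exist; together with $vu$ these are the $k-1$ edges at $v$, carrying the colors $\{1,\dots,k\} \setminus \{i_0\}$, while $uw$ carries $i_0$. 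Thus all $k$ edges of this $B_{k,2}$-copy receive distinct colors, so it is rainbow.

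I expect the only genuinely delicate point to be extracting the color-class rigidity and seeing how it pins down a would-be rainbow broom: the center of any rainbow copy must exhaust all but one color at itself, which determines exactly which edge and color must complete the broom, after which the near-perfect-matching structure makes both implications fall out. No real computation is needed; the remaining care is just bookkeeping that $v$, $u$, $w$, and the bristle vertices are distinct and that all the required edges are present in $H$.
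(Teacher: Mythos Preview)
Your proof is correct and follows essentially the same approach as the paper: both directions hinge on the observation that every color class is a near-perfect matching missing a unique vertex, and in each direction you exploit this structure in the same way the paper does (for ``only if,'' showing the broom center cannot have full degree $k$ via a properness violation at $w$; for ``if,'' choosing the handle endpoint $u$ as the color-$i_0$ partner of $w$). The only difference is cosmetic---your $m_i$/$\phi_i$ notation and the slightly different variable roles---but the underlying argument is identical.
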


\begin{proof}[Proof of Claim]

Suppose first that $u \in V(H)$ has $d(u)  = k - 1$. Thus, $u$ is incident to edges colored by exactly $k-1$ of the $k$ colors appearing in $H$. Without loss of generality, the edge-colors of $H$ are $\{1,2,\dots,k\}$ and $u$ is not incident to color $1$. Now, let $w$ be the unique vertex in $H$ such that $uw \not \in E(H)$. Note that each color class of $H$ has size exactly $\frac{k}{2}$. In particular, for each $i \in \{1,2, \dots, k\}$, there exists exactly one vertex in $H$ which is not incident to an edge of color $i$. Since $u$ is the \textit{unique} vertex in $H$ which is not incident to an edge of color $1$, there must be some neighbor $x$ of $w$ with $c(wx) = 1$. Since $d(u) = k-1$, we must have that $ux \in E(H)$. Without loss of generality, $c(ux) = 2$. Now, $u$ has exactly $k-2$ neighbors which are not in $\{w,x\}$, and is adjacent to these neighbors via edges of colors $\{3, \dots, k\}$. We conclude that $H$ contains a rainbow $B_{k,2}$-copy with handle $wxu$.

On the other hand, suppose $H$ contains a rainbow $B_{k,2}$-copy $B$, say with handle $wxu$, where $u$ is the vertex with degree $k-1$ in $B$. We claim that $u$ also has degree exactly $k-1$ in $H$ (and that $w$ is the unique vertex in $H$ not adjacent to $u$). Indeed, without loss of generality, $c(wx) = 1$ and $c(xu) = 2$. Because $B$ is a rainbow $B_{k,2}$-copy, we know that $u$ has $k-2$ neighbors not in $\{w,x\}$, and that $u$ is adjacent to all of these via edges of colors not equal to $1$ or $2$, say $\{3,4, \dots, k\}$. So $d(u) \geq k-1$. Now, note that if $uw$ is an edge, then $c(uw) \not \in \{1,2, \dots , k\}$ by the properness of the edge-coloring of $H$. So if $uw \in E(H)$, then $c(uw)$ is a new color $k+1$, contradicting the assumption that $H$ is $k$-edge-colored. We conclude that $d(u) = k-1$.
\end{proof}

Applying Claim~\ref{no t-1} and Plantholt's Theorem, it follows that $\mathrm{ex^*}(n,B_{k,2}) = \frac{k^2}{2(k+1)}n + O(1)$ if and only if there exists a subgraph of $K_{k+1}$ with exactly $\frac{k^2}{2}$ edges and no vertex of degree exactly $k-1$. Moreover, if no such subgraph of $K_{k+1}$ exists, then $\mathrm{ex^*}(n,B_{k,2}) = \frac{k-1}{2}n + O(1)$. We shall call a $\frac{k^2}{2}$-edge subgraph of $K_{k+1}$ \textit{good} if it has no degree $k-1$ vertex, and \textit{bad} if not.

\begin{claim}\label{Good subgraphs}
$K_{k+1}$ has a good subgraph for every even $k \geq 6$. $K_5$ (and $K_3$) do not have good subgraphs.
\end{claim}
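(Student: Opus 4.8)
The plan is to reformulate Claim~\ref{Good subgraphs} as a statement about edge \emph{deletions} from $K_{k+1}$. Every subgraph $H$ of $K_{k+1}$ with exactly $\frac{k^2}{2}$ edges arises by deleting a set $M$ of $\binom{k+1}{2} - \frac{k^2}{2} = \frac{k}{2}$ edges from $K_{k+1}$, and conversely. Since every vertex of $K_{k+1}$ has degree $k$, the degree of a vertex $v$ in $H$ equals $k$ minus the number of edges of $M$ incident to $v$; in particular $v$ has degree exactly $k-1$ in $H$ if and only if $v$ is incident to exactly one edge of $M$. Letting $D$ denote the graph with vertex set $V(K_{k+1})$ and edge set $M$, we conclude that $H$ is good exactly when $D$ has no vertex of degree $1$, i.e. when every non-isolated vertex of $D$ has degree at least $2$. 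Thus the claim becomes: for even $k \geq 6$ there is a simple graph with exactly $\frac{k}{2}$ edges on $k+1$ vertices in which every non-isolated vertex has degree $\geq 2$, while no such graph exists with $2$ edges on $5$ vertices (the $K_5$, i.e. $k=4$, case) or with $1$ edge on $3$ vertices (the $K_3$, i.e. $k=2$, case).

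For the existence half, when $k \geq 6$ is even we have $3 \leq \frac{k}{2} \leq k+1$, so we may take $D$ to be a cycle $C_{k/2}$ on $\frac{k}{2}$ of the vertices with the remaining $\frac{k}{2}+1$ vertices isolated. This $D$ has $\frac{k}{2}$ edges and every non-isolated vertex has degree exactly $2$, so deleting $E(C_{k/2})$ from $K_{k+1}$ produces a good subgraph: its cycle vertices have degree $k-2$ and all its other vertices have degree $k$, so no vertex has degree $k-1$.

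For the non-existence half, I would invoke the elementary observation that a simple graph with at least one edge in which every non-isolated vertex has degree $\geq 2$ must contain a cycle and hence has at least $3$ edges. Since the $K_5$ case forces us to delete exactly $2$ edges and the $K_3$ case exactly $1$, neither admits a good subgraph. Equivalently and even more concretely: a single deleted edge leaves its two endpoints of degree $k-1$, and two deleted edges form either a matching or a path $P_2$, in each case leaving at least two vertices of degree $k-1$.

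The entire argument hinges on the edge-deletion reformulation; once that is set up, both directions are essentially immediate and I foresee no real obstacle. The only steps needing a moment of care are checking that ``degree $k-1$ in $H$'' corresponds precisely to ``incident to exactly one deleted edge'' — so that vertices incident to $0$ or to $\geq 2$ deleted edges cause no trouble — and dispatching the two small cases $|M| \in \{1,2\}$ by hand, which is trivial given how few edges there are to delete.
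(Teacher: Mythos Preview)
Your proposal is correct and follows essentially the same approach as the paper: both pass to the complement (your ``deleted edge set'' $D$ is exactly the paper's $\overline{H}$), observe that $H$ is good precisely when the complement has no degree-$1$ vertex, use $C_{k/2}$ together with isolated vertices for the construction when $k \geq 6$, and dispatch the cases $k \in \{2,4\}$ by noting that any simple graph with one or two edges has a degree-$1$ vertex.
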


\begin{proof}
It is clear that $K_3$ has no good subgraph. When $k = 4$, we have $\frac{k^2}{2} = 8$. A subgraph $H$ of $K_5$ with exactly $8$ edges either has $\overline{H} = P_1 \cup P_1$  or $\overline{H} = P_2$. In either case, $\overline{H}$ contains vertices of degree $1$, so $H$ contains vertices of degree $k-1 = 3$. Thus, no subgraph of $K_5$ is good.

On the other hand, for $k \geq 6$, the above argument indicates a method for creating a good subgraph $H$: instead, construct $\overline{H}$ so that $|E(\overline{H})| = \frac{k}{2}$ and $\overline{H}$ contains no vertex of degree $1$. This is always possible for $k \geq 6$: for instance, let $\overline{H} = C_{\frac{k}{2}} \cup E_{\frac{k}{2} + 1}$ (where $E_m$ is the empty graph on $m$ vertices). Note that we do require $k \geq 6$ for this construction, as $C_{\frac{k}{2}}$ is not a simple graph for $k \in \{2,4\}$. Every simple graph on 1 or 2 edges contains a degree 1 vertex.
\end{proof}

In light of Claim~\ref{Good subgraphs} and the above discussion, the theorem follows immediately.
\end{proof}

We note that a full characterization of extremal constructions for $\mathrm{ex^*}(n,B_{k,2})$ is difficult because of the sensitivity to divisibility conditions; for $k \not\in \{2,4\}$, if $n$ is not divisible by $k+1$, then we cannot exactly match the upper bounds of Theorem~\ref{corrected theorem} because the only possible components achieving the extremal density have exactly $k+1$ vertices. It is likely not worth the effort to catalog the extremal constructions for all congruence classes of $n$ modulo $k$. However, in the ``nicest'' divisibility cases, we can precisely describe the possible extremal constructions, which will give some insight into the dichotomy between the even and odd cases. The argument in Claim~\ref{Good subgraphs} demonstrates that there are many non-isomorphic extremal constructions for $\mathrm{ex^*}(n,B_{k,2})$ when $k$ is even. When $k \in \{2,4\}$, any $n$-vertex $(k-1)$-regular graph which admits a proper $(k-1)$-edge-coloring is extremal. When $k \geq 6$ and $k+1$ divides $n$, extremal constructions come from taking disjoint unions of good subgraphs of $K_{k+1}$. Not only are there in general multiple $2$-regular choices for $\overline{H}$ which will yield good subgraphs, but any choice for $\overline{H}$ which contains exactly $\frac{k}{2}$ edges and no degree $1$ vertices is allowed; as $k$ grows, the number of such constructions also increases. On the other hand, when $k$ is odd, the argument of Theorem~\ref{corrected theorem} demonstrates that there are \textit{not} many extremal constructions for $\mathrm{ex^*}(n,B_{k,2})$. In the case where $k+1$ divides $n$, we have $\mathrm{ex^*}(n,B_{k,2}) = \frac{k}{2}n$ exactly. Because in a rainbow-$B_{k,2}$-free graph, every component with maximum degree $\geq k$ must be either a star or a subgraph of $K_{k+1}$, it is clear that taking disjoint copies of $K_{k+1}$ is the unique way to achieve $\mathrm{ex^*}(n,B_{k,2})$ in this case.

\bibliographystyle{abbrvurl}
\bibliography{master.bib}
\end{document}